\newtheorem{theorem}{Theorem}[section]
\newtheorem{corollary}[theorem]{Corollary}
\newtheorem{example}[theorem]{Example}
\newtheorem{question}{Question}[section]
\newtheorem{definition}[theorem]{Definition}
\begin{document}

\title[Selectively highly divergent spaces] {On some questions on selectively
highly divergent spaces}  
\author[A. Bella] {Angelo Bella}

\address{Dipartimento di Matematica e Informatica, Universit\` a di Catania, viale A. Doria
6, 95125 Catania, Italy}
\email{bella@dmi.unict.it}

\author{Santi Spadaro}\address{Dipartimento di Ingegneria,
Universit\` a di Palermo, Viale delle Scienze, Ed. 8, 90128, Palermo, Italy}
\email{santidomenico.spadaro@unipa.it, santidspadaro@gmail.com}

%%%% change the following:
\subjclass[2010]{Primary: 54A20, 54A25; Secondary: 54B20, 54D35, 03E17}
\keywords{Convergent sequence, splitting number, Stone-Cech
compactification, selectively highly divergent space, Pixley-Roy hyperspace}

\begin{abstract} A  topological space $X$ is selectively highly
divergent (SHD) if for every sequence of non-empty open sets
$\{U_n: n\in \omega \}$ of $X$, we can find points $x_n\in U_n$, for every $n <\omega$
such that the sequence $\{x_n: n\in\omega\} $ has no
convergent subsequences. In this note we answer four questions related to this notion that were asked in ArXiv:2307.11992.

\end{abstract} 
\maketitle 
\bigskip

\section{Introduction} 
In this note  we consider  a class of spaces recently studied in
\cite{messico}.

 \begin{definition} \label{def} A
topological space $X$ is \textit{selectively highly divergent}
(SHD from here for short) if for every sequence of non-empty open
subsets $\{U_n: n<\omega  \}$ of $X$, we can find $x_n\in
U_n$ such that the sequence $\{x_n: n<\omega\}$  has no
convergent subsequence. \end{definition}

Clearly,  if a topological space $X$ has a point of
countable character, then it cannot be SHD, in particular   no
metrizable space is SHD.

Nice examples of SHD spaces are the compact Hausdorff space
$\omega^*=\beta \omega\setminus \omega$ and  the countable
regular maximal space $M$  described in \cite{vD1}.  These spaces
however are strictly stronger than  SHD because they do not
contain non-trivial convergent sequences.  In general, a
selectively higly divergent space may have  plenty of convergent
sequences:  a compact Hausdorff space of this kind is
$\omega^*\times I$, while a countable regular one is $M\times
\mathbb Q$.
\smallskip

The property of being selectively highly divergent is much stronger than being not sequentially compact. An easy example of non-sequentially compact space which is not SHD is the space $Z=\omega^*\oplus I$. Note that the space $Z$ has an open subset which is sequentially compact, and one may suspect that a space having no non-empty open sequentially
compact subspace should be SHD, but this is not the case.

\begin{example} \label{ex}  A compact Hausdorff  space with no
non-empty sequentially compact subspace which is not SHD.
\end{example}
\begin{proof} Let 
$X=(\omega^*\times \omega)\cup \{p\}$, where $\omega^*\times
\omega$   with the product topology is an open subspace of $X$,
while a local base at $p$ is the collection $\{(\omega^*\times
[n,\omega[)\cup \{p\}:n<\omega\}$. \end{proof}

If in Definition \ref{def} we consider only  constant  sequences
of open sets, i.e. $U_n=U$ for each $n<\omega$, then we see that
a SHD space has the property that  every non-empty open set
contains a sequence with no subsequences converging in $X$. We
may call a space with this property \textit{highly divergent}
(HD for short). Using this terminology, Example \ref{ex} provides an example of a compact Hausdorff HD space which is not SHD.

\smallskip

In \cite{messico}  the authors formulated various questions about selectively highly divergent spaces. In our paper we will focus on four of them.  

\begin{question} \label{q1} \cite[Question 2]{messico} 
Is it true that if $\kappa$ is an uncountable cardinal, then
$X=\{0,1 \}^{\kappa}$ is a SHD space?
\end{question}  

\begin{question}\label{q2}  \cite[Question 4]{messico} 
If $X$ is Tychonoff, non-compact and SHD, does it hold that
$\beta X$ is SHD? 
\end{question}

\begin{question} \label{q3} \cite[Question 5]{messico} Is the SHD
property dense hereditary? \end{question}

Given a space $X$, let $\mathcal{F}[X]$ denote the Pixley-Roy hyperspace of $X$.

\begin{question} \label{q4} \cite[Question 7]{messico}
Is $\mathcal{F}[X]$ SHD whenever $X$ is SHD and $T_1$?
\end{question}

In the present note, we give a complete answer to Questions \ref{q1}, \ref{q3} and \ref{q4} and a
partial positive answer to Question \ref{q2}.

All spaces are assumed to be $T_1$. For undefined notions, we refer the reader to \cite{En} and \cite{Ku}.

\section{The main results}

We begin by presenting a complete answer to Question \ref{q1}. 

Recall that a collection $\mathcal S$ of subsets of $\omega$ is a
splitting family if for every infinite subset $A\subseteq \omega$
there is an element $S\in \mathcal S$ satisfying $|S\cap
A|=|A\setminus S|=\omega$. The smallest cardinality of a
splitting family on $\omega$ is the splitting number $\mathfrak
s$. It turns out that $\omega_1\le \mathfrak s\le \mathfrak
c$.

\begin{theorem} \label{t1} 
The space $2^\kappa $ is   selectively
highly divergent if and only if $\kappa \ge \mathfrak s$.
\end{theorem}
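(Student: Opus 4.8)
The plan is to reduce everything to the combinatorics of splitting families via the standard correspondence between coordinates of $2^\kappa$ and subsets of $\omega$. Given a sequence $\{x_n : n<\omega\}$ in $2^\kappa$, for each coordinate $\alpha<\kappa$ I would set $S_\alpha=\{n<\omega : x_n(\alpha)=1\}$. The key observation to establish first is that an infinite $A\subseteq\omega$ indexes a convergent subsequence $\{x_n : n\in A\}$ precisely when, for every $\alpha$, the set $A$ is \emph{not} split by $S_\alpha$ (that is, $A\cap S_\alpha$ or $A\setminus S_\alpha$ is finite), since convergence in the product topology is coordinatewise and a $0$--$1$ sequence converges iff it is eventually constant. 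Consequently $\{x_n\}$ has no convergent subsequence if and only if $\{S_\alpha : \alpha<\kappa\}$ is a splitting family.

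For the implication $\kappa\ge\mathfrak s\Rightarrow$ SHD, let $\{U_n : n<\omega\}$ be non-empty open sets and shrink each $U_n$ to a basic open set determined by a finite partial function with support $F_n\subseteq\kappa$. Since $\bigcup_{n<\omega}F_n$ is countable while $\kappa\ge\mathfrak s\ge\omega_1$, the set $C=\kappa\setminus\bigcup_n F_n$ of completely unconstrained coordinates still has cardinality $\kappa\ge\mathfrak s$. Fixing a splitting family $\{T_\beta : \beta<\mathfrak s\}$ on $\omega$ and distinct coordinates $\{\alpha_\beta : \beta<\mathfrak s\}\subseteq C$, I would define $x_n$ to agree with the chosen constraint on $F_n$, to satisfy $x_n(\alpha_\beta)=1$ iff $n\in T_\beta$, and to be $0$ elsewhere; this is consistent because $C\cap F_n=\emptyset$. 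Then $S_{\alpha_\beta}=T_\beta$, so $\{S_\alpha : \alpha<\kappa\}$ contains a splitting family and is therefore itself splitting, whence $\{x_n\}$ has no convergent subsequence.

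For the converse I would argue the contrapositive: if $\kappa<\mathfrak s$, take $U_n=2^\kappa$ for all $n$. For any sequence $\{x_n\}$ the associated family $\{S_\alpha : \alpha<\kappa\}$ has size at most $\kappa<\mathfrak s$, so by minimality of $\mathfrak s$ it is not splitting; hence some infinite $A$ indexes a convergent subsequence, showing that $2^\kappa$ is sequentially compact and in particular not SHD. The only delicate point in the whole argument is the forward direction's bookkeeping, and it is precisely dissolved by the observation that uncountably many coordinates lie outside the countable union of supports, so that the splitting family can be encoded on genuinely free coordinates while the forced coordinates can only enlarge $\{S_\alpha\}$, and enlarging a splitting family preserves splitting.
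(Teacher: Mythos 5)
Your proof is correct, and its core idea coincides with the paper's: encode a splitting family into coordinates of $2^\kappa$ lying outside the (countable) union of the supports of the basic open sets, so that along those free coordinates the chosen points oscillate on every infinite index set. The differences are organizational but worth noting. The paper first reduces to $2^{\mathfrak s}$ via the homeomorphism $2^\kappa\cong 2^{\mathfrak s}\times 2^\kappa$ and the theorem that a product with an SHD factor is SHD, and then handles the interaction with the supports by re-indexing the splitting family so that each member appears $\mathfrak s$-many times, locating a suitable free coordinate only after an alleged convergent subsequence is fixed. You instead work directly in $2^\kappa$, choose the $\mathfrak s$-many free coordinates up front (legitimate, since only countably many coordinates are constrained and $\kappa\ge\mathfrak s\ge\omega_1$), and isolate the clean equivalence that $\{x_n\}$ has no convergent subsequence iff the trace family $\{S_\alpha:\alpha<\kappa\}$ with $S_\alpha=\{n:x_n(\alpha)=1\}$ is splitting; this also lets you prove the converse direction from scratch (it is exactly van Douwen's argument that $2^\kappa$ is sequentially compact for $\kappa<\mathfrak s$, which the paper simply cites). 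The net effect is a self-contained argument that dispenses with both external ingredients and the repeated-indexing trick, at the cost of being slightly less modular than the paper's reduction to the single space $2^{\mathfrak s}$.
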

\begin{proof}  If $\kappa <\mathfrak s$, then $2^\kappa $ is
sequentially compact (see \cite{vD2}, Theorem 6.1). So, if
$2^\kappa $
is SHD, then we should have $\kappa \ge \mathfrak s$. To complete
the
proof, we need to show that  $\kappa \ge \mathfrak s$ implies
that $2^\kappa $ is SHD.  Since $2^\kappa $ is homeomorphic to
$2^\mathfrak s\times 2^\kappa $, taking into account that  any
product having a SHD factor is SHD (see Theorem 1 in
\cite{messico}),   it 
suffices to prove that $2^\mathfrak  s$ is selectively
highly divergent.

 Let $\mathcal S$ be a splitting family on
$\omega$
of size $\mathfrak  s$ and fix an indexing $\mathcal S=\{S_\alpha
:\alpha
<\mathfrak  s\}$ in such a way that every element of $\mathcal
S$ appears
in the list $\mathfrak  s$-many times.

Recall that a base for the topology of $2^\kappa $ consists of
the sets  $[\sigma ]$, where $\sigma\in Fin(\kappa ,2)$ 
is a
partial function  whose domain is a finite subset of $\kappa $
and $[\sigma ]=\{x\in 2^\kappa :\sigma \subseteq x\}$. 
Let $\{U_n:n<\omega\}$ be a family of non-empty open subsets of
$2^\mathfrak  s$ and for each $n$ choose a partial function
$\sigma_n:\mathfrak s \to 2$ such that $[\sigma_n]\subseteq U_n$.

For each $n$ let  $x_n\in 2^\mathfrak  s$ be the point defined as
follows. If $\alpha \in dom(\sigma_n)$, then let $x_n(\alpha
)=\sigma_n(\alpha )$; if $\alpha \in \mathfrak  s\setminus
dom(\sigma_n)$, then let $x_n(\alpha )=1$ when $n\in S_\alpha $
and $x_n(\alpha )=0$ when $n\notin S_\alpha $. Of course, we have
$x_n\in [\sigma_n]\subseteq U_n$.

We claim that the sequence $\{x_n:n<\omega\}$ does not have
convergent subsequences. Assume by contradiction that the
subsequence $\{x_n:n\in A\}$ converges to a point $p$. Since the
family $\mathcal S$ is splitting, there exists $S\in \mathcal S$
such
that $|A\cap S|=|A\setminus S|=\omega$. Since   the set $\bigcup
\{dom(\sigma_n):n<\omega\}$ is countable and $S$ appears in the
list $\{S_\alpha :\alpha <\mathfrak s\}$  $\mathfrak  s$-many 
times,  we
may find $\gamma  \in \mathfrak  s\setminus \bigcup
\{dom(\sigma_n):n\in \omega\}$ such that $S_\gamma  =S$. Now,
since  the sequence $\{x_n:n\in A\cap S\}$ converges to $p$ and
$x_n(\gamma  )=1$ for each $n\in A\cap S$, we must have $p(\gamma
)=1$. But even the sequence $\{x_n:n\in A\setminus S\}$ converges
to $p$ and hence we must also have $p(\gamma  )=0$. As this is a
contradiction, the proof is complete. \end{proof}

Theorem \ref{t1} will help us answer Question \ref{q3} in the negative.   

\begin{example} \label{denseher}  A compact Hausdorff SHD space
with a dense subspace which is not SHD. \end{example}
\begin {proof}  Let $X=2^\mathfrak{c}$.   Theorem \ref{t1} sais
that $X$ is selectively highly divergent.   Let $Y$ be the $\Sigma$-product of $2^{\mathfrak{c}}$, that is $Y=\{x\in X: |x^{-
1}(1)| \leq \omega\}$, with the topology induced from $X$. Then $Y$ is a dense subset of $X$:
Since in $Y$ every countable set is contained in a copy of the Cantor set, we immediately see that $Y$ is sequentially compact. Thus, $Y$ is
a dense  subspace of $X$ which is not selectively higly
divergent. \end{proof}   

We now give a partial answer to Question \ref{q2}. Recall that  a set
$A\subseteq
X$ is $C^*$-embedded in $X$ if every bounded real valued
continuous function defined on $A$ can be continuously extented
to the whole of $X$.  The Tietze-Urysohn theorem  implies that every
closed subspace of a normal space is $C^*$-embedded.
\begin{theorem}  \label{t2} Let $X$ be a Tychonoff SHD space. If
every closed copy of the discrete space $\omega$ is $C^*$-
embedded, then $\beta X$ is SHD.\end{theorem}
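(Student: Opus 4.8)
The plan is to pull the open sets back to the dense copy of $X$, apply SHD there, and then show that the resulting sequence cannot acquire a convergent subsequence in the compactification. Concretely, given a sequence $\{V_n : n<\omega\}$ of non-empty open subsets of $\beta X$, set $W_n = V_n \cap X$; by density of $X$ each $W_n$ is non-empty and open in $X$. Using that $X$ is SHD, I would choose $x_n \in W_n \subseteq V_n$ so that $\{x_n : n<\omega\}$ has no convergent subsequence in $X$, and then claim that these very points already witness the SHD property for $\beta X$.

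To prove the claim, suppose toward a contradiction that some subsequence converges to a point $p \in \beta X$. No value of the sequence can be attained infinitely often, since a constant subsequence would converge in $X$ and contradict the choice of the $x_n$; hence I may refine to a subsequence of pairwise distinct points whose range $E$ still converges to $p$. If $p \in X$, then, because $X$ carries the subspace topology inherited from $\beta X$, this subsequence converges to $p$ inside $X$ as well, again contradicting the choice of the $x_n$. So the only surviving case is $p \in \beta X \setminus X$, and in particular $p \notin E$ since $E \subseteq X$.

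Now comes the point where the hypothesis enters. Because $E$ is a sequence of distinct points converging to the single point $p \notin E$ in the Hausdorff space $\beta X$, the set $E$ is relatively discrete, and an elementary $T_1$ argument shows that $\overline{E}^{\beta X} = E \cup \{p\}$; thus $E$ is a countably infinite discrete subspace. Moreover $\overline{E}^{\beta X} \cap X = (E \cup \{p\}) \cap X = E$, so $E$ is a closed copy of the discrete space $\omega$ in $X$. By hypothesis $E$ is then $C^*$-embedded, and the standard fact that $\mathrm{cl}_{\beta X}(A) = \beta A$ for a $C^*$-embedded subset $A$ gives $\overline{E}^{\beta X} = \beta E \cong \beta \omega$. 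This is absurd: the closure $E \cup \{p\}$ is countable, whereas $\beta \omega$ is uncountable — equivalently, $\{x_n\}$ would be a non-trivial convergent sequence sitting inside a copy of $\beta \omega$, which has none. This contradiction shows that $\{x_n : n<\omega\}$ has no convergent subsequence in $\beta X$, so $\beta X$ is SHD.

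The crux of the argument, and the only place the hypothesis is used, is the implication from \emph{``$E$ is a closed discrete copy of $\omega$ in $X$''} to \emph{``$\overline{E}^{\beta X}$ is a copy of $\beta\omega$.''} Accordingly, the main thing to get right is that the range of the offending subsequence really is closed and discrete in $X$: relative discreteness is automatic from convergence in a Hausdorff space, while closedness is precisely the assertion that the limit $p$ escapes $X$, i.e. $p \in \beta X \setminus X$ — and this is exactly what the SHD property of $X$ guarantees, by ruling out convergence to any point of $X$.
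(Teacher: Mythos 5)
Your proof is correct, and its overall skeleton coincides with the paper's: pull the open sets back to the dense copy of $X$, apply SHD there, observe that a hypothetical limit $p$ of a subsequence must lie in $\beta X\setminus X$, and conclude that the range $E$ of that subsequence is a closed, countably infinite, discrete subspace of $X$, so that the $C^*$-embedding hypothesis applies. Where you diverge is in how the hypothesis is cashed in. The paper stays elementary: it splits the index set into two infinite pieces, defines the $\{0,1\}$-valued function separating them, extends it first to $X$ by $C^*$-embedding and then to $\beta X$ by the universal property, and reads off $g(p)=0$ and $g(p)=1$ simultaneously. You instead invoke the standard fact (Gillman--Jerison) that a $C^*$-embedded subspace $A$ satisfies $\mathrm{cl}_{\beta X}A=\beta A$, so that $\overline{E}^{\beta X}\cong\beta\omega$, and contrast this with your direct computation $\overline{E}^{\beta X}=E\cup\{p\}$ (countable, or equivalently containing a non-trivial convergent sequence). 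Both arguments are sound; yours is slightly quicker if one is willing to quote the $\mathrm{cl}_{\beta X}A=\beta A$ theorem, while the paper's is self-contained, since that theorem is itself proved by exactly the two-valued extension trick the authors carry out by hand. You are also a bit more careful than the paper on one minor point, namely refining to pairwise distinct terms before asserting that $E$ is an infinite discrete set.
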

\begin{proof} Let $\{U_n:n<\omega\}$ be a sequence of non-empty
open sets  of $\beta X$. Since $X$ is SHD,  we may pick points
$x_n\in U_n\cap X$ in such a way that  $\{x_n:n<\omega\}$ does
not have subsequences which are convergent in $X$. We claim  
that  $\{x_n:n<\omega\}$  does not have convergent subsequences
even in $\beta X$.

 Assume by contradiction that the sequence
$\{x_n:n\in A\}$ converges to a point $p\in \beta X$. Clearly, we
should have $p\in \beta X\setminus X$. But then, the set
$\{x_n:n\in A\}$ is closed  and discrete in $X$. Split $A$ in the
union of two infinite subsets $B$ and $C$ and define
$f:\{x_n:n\in A\}\to [0,1]$ by letting $f(x_n)=0$ in $n\in B$ and
$f(x_n)=1$ if $n\in C$. Since the set $\{x_n:n\in A\}$ is $C^*$-
embedded, we may continuously extend $f$ to a function  $f:X\to
[0,1]$.The
next  step is to extend $f$ to a continuous function $g:\beta
X\to [0,1]$.  Since $\{x_n:n\in A\}$ converges to $p$, we should
have $g(p)\in \overline {\{g(x_n):n\in B\}}=\overline
{\{f(x_n):n\in B\}}=\{0\}$, i. e. $g(p)=0$.   The same argument
shows that
$g(p)\in \overline {\{f(x_n):n\in C\}}=\{1\}$, i. e. $g(p)=1$. As
this is a
contradiction, the proof is complete.\end{proof}
We  may mention a couple of corollaries.
\begin{corollary} If $X$ is a normal SHD space, then  $\beta X$
is SHD.\end{corollary}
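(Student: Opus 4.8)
The plan is to deduce this statement directly from Theorem~\ref{t2}, since the corollary is precisely the special case in which the hypothesis on $C^*$-embedded copies of $\omega$ can be discharged for free. First I would note that, under our blanket assumption that all spaces are $T_1$, a normal space is in particular Tychonoff, so Theorem~\ref{t2} is applicable in principle. It then remains only to verify its hypothesis: that every closed copy of the discrete space $\omega$ sitting inside $X$ is $C^*$-embedded.

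For this verification I would invoke the Tietze--Urysohn theorem, recalled just before the statement of Theorem~\ref{t2}, which guarantees that every closed subspace of a normal space is $C^*$-embedded. A closed copy of $\omega$ is by definition a closed subspace of $X$, so the conclusion is immediate; notably, one does not need to exploit either the discreteness or the countability of the copy beyond the single fact that the set in question is closed. Hence every such copy is $C^*$-embedded, the hypothesis of Theorem~\ref{t2} is satisfied, and we conclude that $\beta X$ is SHD.

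Since the argument reduces entirely to the cited theorem together with two standard facts — that normal $T_1$ spaces are Tychonoff and that closed subspaces of normal spaces are $C^*$-embedded — there is essentially no genuine obstacle here. The only point requiring a moment's care is to confirm that a ``closed copy of $\omega$'' is literally a closed subspace, so that Tietze--Urysohn applies verbatim; this holds by the very meaning of the phrase, and so the corollary follows at once.
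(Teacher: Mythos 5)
Your proposal is correct and is exactly the argument the paper intends: the corollary is stated without proof immediately after Theorem~\ref{t2}, precisely because the remark preceding that theorem (Tietze--Urysohn implies closed subspaces of normal spaces are $C^*$-embedded) discharges its hypothesis, and normal $T_1$ spaces are Tychonoff. Nothing further is needed.
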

\begin{corollary} If $X$ is a countable Tychonoff  SHD space,
then $\beta X$ is SHD. \end{corollary}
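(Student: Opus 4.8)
The plan is to obtain this statement as a direct instance of Theorem \ref{t2}, whose sole hypothesis is that every closed copy of the discrete space $\omega$ be $C^*$-embedded. Since the remark preceding Theorem \ref{t2} already records (via Tietze--Urysohn) that every closed subspace of a normal space is $C^*$-embedded, the whole task reduces to verifying that a countable Tychonoff space $X$ is normal; once that is known, every closed copy of discrete $\omega$ is in particular a closed subspace, hence $C^*$-embedded, and Theorem \ref{t2} applies to yield that $\beta X$ is SHD.

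To prove normality I would invoke the classical implication that a regular Lindel\"of space is normal. A countable space is trivially Lindel\"of, since any open cover is indexed by a countable set and thus already admits a countable subcover; and a Tychonoff (completely regular Hausdorff) space is in particular regular. Hence $X$ is a regular Lindel\"of space and therefore normal. If one prefers a self-contained argument, the standard proof goes as follows: given disjoint closed sets $A,B$, use regularity to cover $A$ by open sets whose closures miss $B$ and $B$ by open sets whose closures miss $A$, apply the Lindel\"of property to extract countable subcovers $\{U_n\}$ and $\{V_n\}$, and then separate $A$ and $B$ by the shrunken open sets $\bigcup_n (U_n\setminus \bigcup_{k\le n}\overline{V_k})$ and $\bigcup_n (V_n\setminus \bigcup_{k\le n}\overline{U_k})$.

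I do not anticipate any real obstacle here: the only point requiring care is the passage from ``countable Tychonoff'' to ``normal'', which rests on the two elementary facts that countable implies Lindel\"of and that regular Lindel\"of implies normal. In fact, once normality is established the corollary becomes a special case of the preceding corollary on normal SHD spaces, so one could equally well finish by citing that statement directly rather than re-invoking Theorem \ref{t2}.
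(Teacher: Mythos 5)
Your proposal is correct and follows exactly the route the paper intends: the corollary is stated without proof as a consequence of Theorem \ref{t2}, and the missing link is precisely that a countable Tychonoff space is regular and Lindel\"of, hence normal, so every closed copy of discrete $\omega$ is $C^*$-embedded by Tietze--Urysohn. Your observation that one could equally cite the preceding corollary on normal SHD spaces is also the natural reading of the paper.
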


So,  we see that $\beta M$ is
SHD.
\smallskip 

Example \ref{denseher} already shows that  the HD property is not
dense hereditary. We now describe another example which involves the
\v Cech-Stone compactification.  
 Let us consider the space  $\beta \mathbb Q$. It is
clear that $\mathbb Q$ is dense and far to be higly divergent. We
check that $\beta\mathbb Q$ is HD. To this end, let $U$ be a
non-empty open subset  of $\beta\mathbb Q$ and take a non-empty
open 
set $V$ such that $\overline V\subseteq U$.  The set $V\cap
\mathbb Q$ contains  a closed copy $A$ of the discrete space
$\omega$. Since $A$ is $C^*$-embedded in $\mathbb Q$, we have
that $\overline A\subseteq U$ is homeomorphic to $\beta\omega$
and so  every non-trivial sequence in $\overline A\subseteq U$
has no convergent subsequences in $\beta\mathbb Q$.

Notice that $\beta\mathbb Q $  is not SHD because it is
first countable at each point $q\in \mathbb Q$. So,  
$\beta\mathbb Q$ is another  compact Hausdorff  HD space which
is not SHD.   However, the   space $X$ given in Example \ref{ex}
is of different nature because every dense set $D$ of $X$ is
higly divergent.  To check this, let $U$ be a non-empty open set
in the subspace  $D$ and fix an open set $V$ of $X$ such that
$U=V\cap D$.  There is some $n\in \omega$ such that $V\cap
\omega^*\times \{n\}\ne \emptyset$ and so even $V\cap
\omega^*\times \{n\}\cap D=U\cap \omega^*\times\{n\}\ne \emptyset
$. Since the latter set is infinite, we may fix an infinite set
$\{x_n:n<\omega\}$ in it. $\{x_n:n<\omega\}$ is a sequence in
$U$ with no subsequences converging  in $\omega^*\times \{n\}$
and so a fortiori in $D$. 

We finish by giving a complete answer to Question $\ref{q4}$. Given a space $X$, the Pixley-Roy topology on $X$ is the space $\mathcal{F}[X]=[X]^{<\omega}$ equipped with the topology generated by sets of the form $[F,U]=\{G \in \mathcal{F}(X): F \subset G \subset U\}$, where $F$ is a finite subset of $X$ and $U$ is an open subset of $X$.

The authors of \cite{messico} proved that if $X$ is an SHD space whose every subset is closed and discrete (this hypothesis is verified, in particular if $X$ is a $P$-space), then $\mathcal{F}[X]$ is also SHD, and asked whether this is true in general.

\begin{theorem}
Let $X$ be any SHD space. Then $\mathcal{F}[X]$ is also SHD.
\end{theorem}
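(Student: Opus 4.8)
The plan is to take a sequence of basic open sets in $\mathcal{F}[X]$ and produce, from each, a point (a finite subset of $X$) so that the resulting sequence of finite sets has no convergent subsequence. The key observation is that convergence in the Pixley-Roy topology is rigid: a sequence $\{G_n\}$ converging to $G$ forces, for large $n$, that $G \subseteq G_n$ and $G_n$ lies inside every neighborhood $U$ of $G$. I plan to exploit this by building the $G_n$ to contain points drawn from the SHD property of $X$, so that any convergent subsequence would yield a convergent subsequence in $X$, contradicting SHD-ness.

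Let me describe the main steps.

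Step one: unpack the basic open sets. Given non-empty open sets $\{U_n : n<\omega\}$ in $\mathcal{F}[X]$, shrink each to a basic open set $[F_n, V_n] \subseteq U_n$, where $F_n \in [X]^{<\omega}$ and $V_n$ is open in $X$ with $F_n \subseteq V_n$. Note $V_n$ is a non-empty open subset of $X$.

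Step two: apply SHD in $X$. Using that $X$ is SHD, choose points $x_n \in V_n$ such that $\{x_n : n<\omega\}$ has no convergent subsequence in $X$. Then set $G_n = F_n \cup \{x_n\}$; since $F_n \subseteq G_n \subseteq V_n$, we have $G_n \in [F_n, V_n] \subseteq U_n$, so each $G_n$ is a legitimate choice.

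Step three: rule out convergent subsequences. Suppose, toward a contradiction, that $\{G_n : n \in A\}$ converges to some $G \in \mathcal{F}[X]$. The defining feature of the Pixley-Roy topology is that for the basic neighborhood $[G, X]$ of $G$, eventual membership forces $G \subseteq G_n$; and more sharply, for any open $U \supseteq G$, the neighborhood $[G, U]$ forces $G_n \subseteq U$ eventually. The point I want to extract is that $x_n$ must approach $G$ in a controlled way. Here the main obstacle lies: a priori the $x_n$ could fall on the fixed finite part $G$ rather than escaping to infinity, so I must separate two cases. If infinitely many $x_n$ (for $n$ in some infinite $A' \subseteq A$) land inside the finite set $G$, then by pigeonhole a single point of $G$ is hit infinitely often, giving a constant—hence trivially convergent—subsequence of $\{x_n\}$ in $X$, a contradiction. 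Otherwise, for cofinitely many $n \in A$ we have $x_n \notin G$, and I claim $\{x_n : n \in A\}$ then converges in $X$ to a suitable point. To see this, fix any open $W \ni$ the relevant limit point; using that $[G,U]$ eventually captures $G_n$ for every open $U \supseteq G$, the stray points $x_n$ are forced into every open set containing $G$, which, since $G$ is finite, pins down convergence of $\{x_n\}$ in $X$. Either way a convergent subsequence of $\{x_n : n<\omega\}$ emerges, contradicting the choice in step two.

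The delicate part, and the step I would write most carefully, is the final one: translating Pixley-Roy convergence of the sets $G_n$ into genuine topological convergence of the witnessing points $x_n$ in $X$. The subtlety is that the finite ``anchors'' $F_n$ vary with $n$ and may interfere, so one must check that it is really the escaping points $x_n$—and not the anchors—that carry the convergence, which is exactly why the pigeonhole dichotomy on whether $x_n \in G$ is needed.
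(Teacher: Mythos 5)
Your Steps one and two coincide exactly with the paper's argument: reduce to basic open sets $[F_n,V_n]$, use SHD of $X$ to pick $x_n\in V_n$ with no convergent subsequence, and put $G_n=F_n\cup\{x_n\}$. The gap is in Step three, case (b). From Pixley--Roy convergence of $\{G_n:n\in A\}$ to $G$ you correctly extract that the points $x_n$ eventually enter every open $U\supseteq G$, but the conclusion you draw --- that $\{x_n:n\in A\}$ \emph{converges} in $X$ to a single ``suitable point'' --- is false when $|G|\ge 2$: the $x_n$ may oscillate among small neighbourhoods of the several points of $G$ without converging to any one of them. ``$G$ is finite'' does not by itself ``pin down convergence''; it does so only when $G$ is a singleton. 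So the contradiction in case (b), which is the heart of the proof, is not actually established; the sentence you flag as the delicate step is precisely the one that is missing.

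What is true, and what the paper proves, is the weaker statement that some \emph{subsequence} of $\{x_n:n\in A\}$ would have to converge to one of the finitely many points of $G$, and extracting it requires a finite recursion over $G$. Enumerate $G=\{y_1,\dots,y_p\}$. Since no subsequence of $\{x_n\}$ converges, the sequence indexed by $A$ does not converge to $y_1$, so there are an infinite $A_1\subseteq A$ and an open $U_1\ni y_1$ with $x_n\notin U_1$ for all $n\in A_1$; repeating for $y_2,\dots,y_p$ along nested infinite index sets $A_1\supseteq\dots\supseteq A_p$ yields open sets $U_i\ni y_i$ with $x_n\notin U_1\cup\dots\cup U_p$ for all $n\in A_p$. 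Then $U=U_1\cup\dots\cup U_p$ is an open set containing $G$ and missing $\{x_n:n\in A_p\}$, so $[G,U]$ is a neighbourhood of $G$ omitting $G_n$ for the infinitely many $n\in A_p$, contradicting convergence. Note that this recursion also absorbs your case (a): a constant subsequence is a convergent subsequence and is already excluded by the choice of the $x_n$, so the pigeonhole dichotomy is unnecessary once the thinning argument is in place.
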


\begin{proof}
Let $\mathcal{U}$ be a countable sequence of non-empty open subsets of $\mathcal{F}[X]$. Without loss of generality we can assume that $\mathcal{U}$ is made up of basic open sets and thus we can enumerate $\mathcal{U}$ as $\{[F_n, U_n]: n < \omega \}$, where $F_n \in \mathcal{F}[X]$ and $U_n$ is a non-empty open subset of $X$. By the SHD property of $X$ we can pick a point $x_n \in U_n$, for every $n<\omega$ such that $\{x_n: n < \omega\}$ has no converging subsequence. Define $G_n=F_n \cup \{x_n\}$. Then $G_n \in [F_n, U_n]$, for every $n < \omega$. We claim that $\{G_n: n < \omega \}$ has no converging subsequence. Suppose that this is not the case and let $\{G_{n_k}: k < \omega \}$ be a subsequence converging to some point $G \in \mathcal{F}[X]$. That induces a subsequence $\{x_{n_k}: k < \omega\}$ of $\{x_n: n < \omega \}$ in the space $X$. Moreover, fix an enumeration $\{y_i: 1 \leq i \leq p\}$ of the set $G$. 

Since $S_0=\{x_{n_k}: k < \omega\}$ does not converge to $y_1$ then there are an infinite subset $S_1$ of $S_0$ and an open neighbourhood $U_1$ of $y_1$ such that $U_1 \cap S_1=\emptyset$. Now, since $S_1$ does not converge to $y_2$, there are an infinite subset $S_2$ of $S_1$ and an open neighbourhood $U_2$ of $y_2$ such that $U_2 \cap S_2=\emptyset$. Continuing in this way we can construct a decreasing sequence of infinite sets $\{S_i: 0 \leq i \leq p\}$ and a sequence of open sets $\{U_i: 1 \leq i \leq p\}$ such that $y_i \in U_i$ and $U_i \cap S_i=\emptyset$, for every $i \in \{1, \dots, p\}$. 

Notice that $U=\bigcup \{U_i: 1 \leq i \leq p\}$ is an open set which contains $G$ and is disjoint from $S_p$. It follows that the set $[G, U]$ is an open neighbourhood of $G$ in the Pixley-Roy topology which does not contain a tail of the sequence $\{G_{n_k}: k < \omega \}$ and that is a contradiction.
\end{proof}

\section{Acknowledgements}

Both authors were partially supported by the GNSAGA group of INdAM. In addition to that, the first named author was supported by a grant from ``Progetto PIACERI, linea intervento 2" of the University of Catania and the second-named author was supported by a grant from  the ``Fondo Finalizzato alla Ricerca di Ateneo'' (FFR 2023) of the University of Palermo.

\end{document}